\documentclass[12pt,reqno,twoside]{amsart}

\usepackage{amssymb,amsmath,amscd,enumerate,verbatim}
\usepackage[pagebackref=true,colorlinks=true,linkcolor=blue,citecolor=blue]{hyperref}

\usepackage[latin1]{inputenc}
\usepackage{color}
\usepackage{graphicx}
\usepackage{tikz}

\usepackage[all]{xy}
\usepackage{pstricks, pst-3d}

\newcommand{\mm}{\mathfrak m}

\newcommand{\kk}{\mathrm k}

\DeclareMathOperator{\pnt}{\raise 0.5mm \hbox{\large\bf.}}

\DeclareMathOperator{\Ass}{Ass}

\let\phi=\varphi
\let\:=\colon

\newtheorem{thm}{\bf Theorem}[section]
\newtheorem{lem}[thm]{\bf Lemma}

\newtheorem{prop}[thm]{\bf Proposition}
\newtheorem{conj}[thm]{\bf Conjecture}

\theoremstyle{definition}
\newtheorem{defn}[thm]{\bf Definition}

\theoremstyle{plain}
\newtheorem*{thm*}{Theorem}
\newtheorem*{lem*}{Lemma}
\newtheorem*{cor*}{Corollary}
\newtheorem*{claim*}{Claim}
\newtheorem*{defn*}{Definition}

\theoremstyle{remark}

\newtheorem{exm}[thm]{Example}

\numberwithin{equation}{section}
\title{Stabilization index of V-number of powers of edge ideals of graphs}

\author{Ha Thi Thu Hien}
\address{Foreign Trade University, 91 Chua Lang, Hanoi, Vietnam}
\email{thuhienha504@gmail.com}
\author{Thanh Vu}
\address{Institute of Mathematics, VAST, 18 Hoang Quoc Viet, Hanoi, Vietnam}
\email{vuqthanh@gmail.com}
\thanks{}
\date{\today}
\subjclass[2020]{13C15, 13F55, 05E40}
\keywords{v-number; stablization index; edge ideal}

\begin{document}

\begin{abstract} We provide a combinatorial description of the stabilization index of the $v$-function of powers of edge ideals for arbitrary graphs.
\end{abstract}

\maketitle

\section{Introduction} 
Let $G$ be a finite simple graph on the vertex set $V(G) = [n] = \{1, \ldots, n\}$. The \emph{edge ideal} of $G$ is the squarefree monomial ideal
\[
I(G) = (x_i x_j \mid \{i, j\} \in E(G))
\]
in the polynomial ring $S = \mathrm{k}[x_1, \ldots, x_n]$. In this paper, we give a complete description of the stabilization index of the $v$-function of the powers of $I(G)$.

Recall that the $v$-number was introduced by Cooper, Seceleanu, Tohaneanu, Vaz Pinto, and Villarreal \cite{CSTVV}. For an ideal $I$ of $S$ and an associated prime $P \in \operatorname{Ass}(I)$, the \emph{local $v$-number} of $I$ at $P$ is defined by
\[
v_P(I) = \min\{\deg f \mid f \in S, \ I:f = P\},
\]
and the \emph{$v$-number} of $I$ is defined by
\[
v(I) = \min\{v_P(I) \mid P \in \operatorname{Ass}(I)\}.
\]
Conca \cite{C} and, independently, Ficarra and Sgroi \cite{FS} proved that for any homogeneous ideal $I$, the function $t \mapsto v(I^t)$ eventually becomes linear. The smallest integer $t_0$ for which $v(I^t)$ becomes linear for all $t \ge t_0$ is called the \emph{$v$-stabilization index} of $I$, denoted by $\operatorname{vstab}(I)$. Determining the stabilization index of powers of a given ideal is generally a difficult and important problem. Indeed, even for edge ideals of graphs, Chau, Ha, Jayanthan, and Vu \cite{CHJV} showed that the function $v(I^t)$ can behave non-monotonically prior to stabilization: specifically, for any positive integer $k>1$, there exists a graph whose edge ideal $I$ such that $v(I) < v(I^2) < \cdots < v(I^{k-1})$, yet all of these values strictly exceed $v(I^k)$. 

Nevertheless, the eventual linear behavior of edge ideals is particularly clean. By a result of Biswas, Mandal, and Saha \cite{BMS}, if $G$ is a connected graph, then
\[
v(I(G)^t) = 2t - 1
\]
for all sufficiently large $t$. One of the first key observations of this paper is that the stabilization index can be detected at the very first occurrence of the eventual value; namely,
\[
\operatorname{vstab}(I(G)) = \min\{t \ge 1 \mid v(I(G)^t) = 2t - 1\}.
\]

We first consider the contribution of the minimal primes of $I(G)$. For a connected graph $G$, define
\[
\kappa(G) = \min \left\{ |X| \ \middle|\ \begin{array}{l} X \text{ is a independent set, } G^2[X] \text{ is connected and } N_G(X) \text{ is a cover of } G\end{array} \right\},
\]
where $G^2$ is the second power graph of $G$; namely, the graph on $V(G)$ obtained by connecting all pairs of vertices at a distance of at most $2$ in $G$. We show that the first possible occurrence of the value $2t - 1$ originating from a minimal prime is governed precisely by $\kappa(G)$. More specifically, we prove that 
\[
\min \{t \mid \text{there exists a minimal prime } P \text{ of } I(G) \text{ such that } v_P(I(G)^t) = 2t-1\} = \kappa(G).
\]

The remaining difficulty stems from embedded associated primes. The embedded associated primes of powers of edge ideals were completely described by Lam and Trung \cite{LT}. Their description relies on generalized ear decompositions and involves an invariant, $\mu^*$, associated with strongly non-bipartite graphs. Recall that a graph is called \emph{strongly non-bipartite} if each of its connected components is non-bipartite. Roughly speaking, for a connected non-bipartite graph $H$, the invariant $\mu^*(H)$ measures the cost of turning $H$ into a factor-critical graph by parallelizing its vertices. We refer the reader to Section~\ref{sec_embedded} for the precise definition and further details.

We demonstrate that among the embedded associated primes of powers of $I(G)$, only those arising from a special class of strongly non-bipartite subgraphs dictate the stabilization index. To describe these primes and their contribution to the $v$-function, we introduce the following definition.

\begin{defn}
Let $G$ be a simple connected graph. A pair $(U,V)$ of subsets of $V(G)$ is called \emph{admissible} if
\begin{enumerate}
    \item either $U = \emptyset$, or $G[U]$ is strongly non-bipartite;
    \item $V$ is an independent set;
    \item $N_G(U) \cap V = \emptyset$;
    \item $G^2[U \cup V]$ is connected;
    \item $F = N_G[U] \cup N_G(V)$ is a vertex cover of $G$.
\end{enumerate}
For an admissible pair $(U,V)$, we define
\[
\lambda(U,V) = \mu^*(G[U]) + c(G[U]) + |V|,
\]
where $c(G[U])$ denotes the number of connected components of $G[U]$, and by convention, $\mu^*(G[\emptyset]) = 0$ and $c(G[\emptyset]) = 0$ when $U = \emptyset$.
\end{defn}
The case $U = \emptyset$ recovers the contribution coming from minimal primes and corresponds directly to the invariant $\kappa(G)$. Our main theorem shows that the stabilization index is precisely determined by taking the minimum of $\lambda(U,V)$ over all admissible pairs.

\begin{thm}\label{thm_main}
Let $G$ be a simple connected graph. Then
\[
\operatorname{vstab}(I(G)) = \min\{\lambda(U,V) \mid (U,V) \text{ is an admissible pair}\}.
\]
\end{thm}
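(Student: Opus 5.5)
The plan has three parts: reduce the stabilization index to the first time $v(I^t)=2t-1$, give a lower bound on $v_P(I^t)$ for every associated prime $P$, and then match that bound with witnesses built from admissible pairs. Throughout, $I=I(G)$.

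\emph{Step 1 (reduction).} By the result of Biswas, Mandal and Saha, $v(I^t)=2t-1$ for $t\gg0$. I will first prove the general lower bound $v(I^t)\ge 2t-1$ for all $t$. This holds because any $f$ with $I^t:f=P$ satisfies $f\notin I^t$ while $fx_i\in I^t$ for some variable $x_i$, so $f$ has degree at least $2t-1$.

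Next I will show persistence: if $v_P(I^t)=2t-1$ is witnessed by a monomial $f$, then $fe$ witnesses $v_P(I^{t+1})=2t+1$ for a suitable edge $e$. I expect the right choice of $e$ is an edge at a vertex of the structure defining $P$ (an edge inside $U$, or an edge from $V$ to $N(V)$), using the standard identity $I^{t+1}:e=I^t$ on the relevant part. This gives
\[
\operatorname{vstab}(I)=\min\{t\mid v(I^t)=2t-1\}.
\]

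\emph{Step 2 (lower bound).} Assume $v_P(I^t)=2t-1$ with monomial witness $f$ of degree $2t-1$. Write $f=x^{a}$.

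Since $fx_j\in I^t$ for every $x_j\in P$, the support of $f$ behaves like a near-perfect $t$-matching structure. Using the Lam--Trung description of $\operatorname{Ass}(I^t)$, I will write $P=(N_G[U]\cup N_G(V))$. Here $U$ is the set of vertices lying in odd-cycle (non-bipartite) parts of the support of $f$, and $V$ is the set of "singleton" vertices whose whole neighbourhood lies in $P$.

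The conditions $I^t:f=P$ with $P$ a prime force the following:
\begin{itemize}
\item $F=N_G[U]\cup N_G(V)$ is a vertex cover of $G$;
\item $V$ is independent with $N_G(U)\cap V=\emptyset$;
\item $G^2[U\cup V]$ is connected, because otherwise $f$ splits into two factors and degree counting contradicts $\deg f=2t-1$.
\end{itemize}

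The degree budget then gives $t\ge \mu^*(G[U])+c(G[U])+|V|$. Each vertex of $V$ costs one extra edge in the power. Each non-bipartite component of $G[U]$ costs its parallelization cost $\mu^*$ plus one, which is where the definition of $\mu^*$ enters.

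\emph{Step 3 (upper bound).} Conversely, for an admissible pair $(U,V)$ with $t=\lambda(U,V)$, I will construct an explicit $f$ of degree $2t-1$. For each component $H$ of $G[U]$, take the factor-critical parallelization realizing $\mu^*(H)$ together with its near-perfect-matching monomials. For each $v\in V$, take a path to a neighbour. These pieces are glued along $G^2$-connectivity, with one unmatched vertex overall, and I will verify that $I^t:f=(F)$.

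\emph{Main obstacle.} The hardest step is Step 2: extracting $(U,V)$ from an arbitrary witness $f$ for an embedded prime and proving the sharp count $\mu^*+c+|V|$. This requires translating the Lam--Trung ear-decomposition criterion into degree inequalities. I would first treat the minimal-prime case $U=\emptyset$, which should give $\kappa(G)$, and then localize at each non-bipartite component.
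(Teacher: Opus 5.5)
Your three-part architecture matches the paper's: Lemma~\ref{lem_index_0} for the reduction, Lemmas~\ref{lem_kappa_2} and~\ref{lem_admissible} for the constructions, and Lemmas~\ref{lem_kappa_1} and~\ref{lem_construct_U_V} for the lower bound. The gap is Step~2, which carries the real content: it is asserted rather than derived, and your recipe for reading $(U,V)$ off a witness is wrong.

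\emph{A counterexample to your recipe.} Take $G$ with edges $\{1,2\},\{1,3\},\{2,3\},\{1,4\},\{2,5\}$, $t=3$ and $f=x_1x_2x_3x_4x_5$. Both $x_1f=(x_1x_3)(x_1x_4)(x_2x_5)$ and $x_2f=(x_2x_3)(x_1x_4)(x_2x_5)$ lie in $I^3$. Every edge meets $\{1,2\}$, so $mf\notin I^3$ for any monomial $m$ in $x_3,x_4,x_5$. Hence $I^3:f=(x_1,x_2)$ with $\deg f=2t-1$. The support of $f$ contains the triangle $\{1,2,3\}$, so your rule puts $1,2,3$ into $U$; note that $3$ also qualifies as a ``singleton'' for $V$. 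But then $N_G[U]=\{1,\dots,5\}\neq\{1,2\}$. The correct pair is $U=\emptyset$, $V=\{3,4,5\}$, with $\lambda=3=t$. Likewise, the Lam--Trung description only says for which $t$ a prime is associated. It does not see $\deg f$, so it cannot yield $t\ge\mu^*(G[U])+c(G[U])+|V|$ for a given witness.

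\emph{The missing idea} is to pass to matchings in the parallelization and apply Gallai--Edmonds.
\begin{itemize}
\item Since $x^{\mathbf a}\in I^t$ iff $\operatorname{mat}(G_{\mathbf a})\ge t$, the hypotheses say that $K=G_{\mathbf a}$ has $2t-1$ vertices and $\operatorname{mat}(K)=t-1$.
\item They also give $P=N_G(\pi(D(K)))$: an extra copy of $v$ completes a perfect matching iff it can be matched to a vertex missed by some maximum matching.
\item Theorem~\ref{thm_GE} gives $c(D(K))-|A(K)|=1$, and $C(K)=\emptyset$ because $P$ is a cover.
\item Copies of one vertex are twins, so they lie together in $D(K)$, and distinct components of $K[D(K)]$ have non-adjacent projections. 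Let $U$ be the projection of the nontrivial components $Q_1,\dots,Q_q$ and $V$ that of the $s\ge|V|$ singleton components. Each $Q_i$ is then a factor-critical parallelization of a non-bipartite component of $G[U]$, so $|Q_i|\ge 2\mu^*(G[\pi(Q_i)])+1$.
\item Counting: $|A(K)|=q+s-1$ and $2t-1=\sum_i|Q_i|+s+|A(K)|$ give $t=\sum_i\frac{|Q_i|+1}{2}+s\ge\lambda(U,V)$.
\item Connectivity of $G^2[U\cup V]$ is the surplus part of Gallai--Edmonds. Every nonempty $S\subseteq A(K)$ is adjacent to at least $|S|+1$ components of $D(K)$. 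So each connected piece of the incidence graph between components of $D(K)$ and $A(K)$ contributes at least $1$ to $c(D(K))-|A(K)|=1$, forcing a single piece.
\end{itemize}
This is the precise form of your ``degree budget'' and ``$f$ splits'' heuristics.

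\emph{A smaller point in Step~1.} The identity $I^{t+1}:e=I^t$ is false for a single edge: on the path $1,2,3,4$, $x_1x_4\in I^2:x_2x_3$. What holds for edge ideals is $I^{t+1}:I=I^t$. Then $P=I^{t+1}:fI=\bigcap_e(I^{t+1}:fm_e)$, with $m_e$ the edge monomials and every term containing $P$. Primality of $P$ then gives one edge $e$ with $I^{t+1}:fm_e=P$.
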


In particular, this yields a purely combinatorial description of the stabilization index of the $v$-function entirely in terms of $G$. The proof consists of two main parts. For minimal associated primes, we use even-connections to construct witness monomials that realize the relevant local $v$-numbers, leading to the parameter $\kappa(G)$. For embedded associated primes, we combine this approach with the description of associated primes in terms of parallelizations and factor-critical graphs due to Lam and Trung. Here, the Gallai--Edmonds structure theorem allows us to identify the relevant witness sets and express their contribution explicitly in terms of $\mu^*(G[U])$ and $|V|$.

Finally, when $G$ is disconnected with $c$ connected components, say $G = G_1 \cup \cdots \cup G_c$, we deduce the following result from the work of Ficarra and Marques \cite{FM}:

\begin{thm}\label{thm_discon}
Let $G$ be a simple graph with $c$ connected components $G_1, \ldots, G_c$. Then
\[
\operatorname{vstab}(I(G)) = \sum_{i=1}^c \operatorname{vstab}(I(G_i)) - c + 1.
\]
\end{thm}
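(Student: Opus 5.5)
The plan is to reduce everything to the components through a splitting formula for the $v$-number of powers of a sum of ideals in disjoint sets of variables, and then to use the lower bound $v(I(G_i)^t)\ge 2t-1$ together with the characterization of $\operatorname{vstab}$ as the first occurrence of the eventual value.

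Write $I_i=I(G_i)\subseteq S_i$, where the $S_i$ are polynomial rings in pairwise disjoint sets of variables, so that $I(G)=I_1+\cdots+I_c$. I would invoke the result of Ficarra and Marques \cite{FM}, which, as I understand it, gives
\[
v\big((I_1+\cdots+I_c)^t\big)=\min\Big\{\sum_{i=1}^c v(I_i^{t_i}) \ \Big|\ t_i\ge 1,\ \sum_{i=1}^c t_i=t+c-1\Big\}.
\]
For $c=2$ this is the two-summand formula in \cite{FM}, and the general case follows by induction on $c$. As a sanity check, for two disjoint edges it gives $v(I)=2$, which is correct. If the statement in \cite{FM} is indexed differently, I would first rewrite it in this form.

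Put $f_i(t)=v(I_i^t)$ and $s_i=\operatorname{vstab}(I_i)$.

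\textbf{Step 1 (lower bound).} For any ideal generated in degree $2$ we have $f_i(t)\ge 2t-1$. Indeed, if $I_i^t:f=P\ne S_i$, then $f\notin I_i^t$ but $x_jf\in I_i^t$ for some variable $x_j\in P$. Since $I_i^t$ is generated in degree $2t$, this forces $\deg f\ge 2t-1$.

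\textbf{Step 2 (when equality holds).} By the observation in the introduction, $s_i=\min\{t\mid f_i(t)=2t-1\}$ and $f_i(t)=2t-1$ for all $t\ge s_i$. I also need that $f_i(t)>2t-1$ for $t<s_i$, which holds by minimality of $s_i$. Consequently $f_i(t)=2t-1$ if and only if $t\ge s_i$.

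\textbf{Step 3 (combining).} Plugging Step 1 into the splitting formula gives
\[
v(I(G)^t)\ge \sum_i (2t_i-1)=2t+c-2,
\]
with equality exactly when the minimum is attained at a tuple with every $t_i\ge s_i$. Such a tuple with $\sum t_i=t+c-1$ exists if and only if $t+c-1\ge \sum_i s_i$. Setting $T=\sum_i s_i-c+1$, we therefore get $v(I(G)^t)=2t+c-2$ for all $t\ge T$, and $v(I(G)^t)>2t+c-2$ for $t<T$.

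\textbf{Step 4 (identifying $\operatorname{vstab}$).} From Step 3 we have $\operatorname{vstab}(I(G))\le T$. Conversely, suppose the function were linear from some $t_0\le T-1$ on. Since it agrees with $2t+c-2$ at $T$ and $T+1$, that linear function must be $2t+c-2$. Evaluating it at $t_0$ would then give equality at $t_0<T$, contradicting Step 3. Hence $\operatorname{vstab}(I(G))=T$, which is the claimed formula.

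The main obstacle is Step 2: it relies on the earlier result that $v(I_i^t)=2t-1$ persists for every $t\ge s_i$ once it first occurs, that is, on the first-occurrence characterization of $\operatorname{vstab}$ for connected graphs. A smaller point to check is that the splitting formula from \cite{FM} applies verbatim to powers of sums with more than two summands, including the constraint $t_i\ge1$. If \cite{FM} allows $t_i=0$, those terms have to be handled using $v(I_i^0)$ conventions, and I would verify that they never attain the minimum.
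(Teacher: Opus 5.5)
Your proposal is correct and takes essentially the paper's route: the Ficarra--Marques splitting formula for powers of sums in disjoint variables (indices summing to $t+c-1$), the lower bound $v(I(G_i)^t)\ge 2t-1$, and the first-occurrence characterization of $\operatorname{vstab}$ from Lemma~\ref{lem_index_0}. The differences are minor: you apply the global $c$-summand formula at once and prove directly that $v(I(G)^t)=2t+c-2$ holds exactly for $t\ge \sum_i s_i-c+1$, whereas the paper inducts on $c$ using local $v$-numbers at chosen primes and relies on Lemma~\ref{lem_index_1} for the first-occurrence step.
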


The paper is organized as follows. In Section~\ref{sec_minimal}, we recall the necessary background on $v$-numbers, edge ideals, and even-connections, and we establish the connection between minimal primes and the invariant $\kappa(G)$. In Section~\ref{sec_embedded}, we introduce the $\mu^*$-invariant and its role in describing embedded associated primes, leading to the proof of Theorem~\ref{thm_main}. Finally, in Section~\ref{sec_discon}, we address the case of disconnected graphs and prove Theorem~\ref{thm_discon}.

\section{Local $v$-numbers of powers at minimal primes}\label{sec_minimal} 
Let $G$ be a simple graph with vertex set $V(G) = \{1, \ldots, n\}$ and edge set $E(G)$. Throughout this section, we assume that $G$ has no isolated vertices.

\begin{defn}
Let $G$ be a simple graph with vertex set $V(G) = \{1, \ldots, n\}$ and edge set $E(G)$.
\begin{enumerate}
    \item A simple graph $H$ is a \emph{subgraph} of $G$ if $V(H) \subseteq V(G)$ and $E(H) \subseteq E(G)$. It is an \emph{induced subgraph} of $G$ if $E(H) = \{\{u,v\} \in E(G) \mid u, v \in V(H)\}$.
    
    \item For a subset $U \subseteq V(G)$, we denote by $G[U]$ and $G - U$ the induced subgraphs of $G$ on $U$ and on $V(G) \setminus U$, respectively.
    
    \item For a vertex $v \in V(G)$, $N_G(v)$ denotes the set of neighbors of $v$. For a subset $U \subseteq V(G)$, $N_G(U) = \bigcup_{u \in U} N_G(u)$ denotes the neighborhood of $U$, and $N_G[U] = N_G(U) \cup U$ denotes the closed neighborhood of $U$.

   \item A subset $X \subseteq V(G)$ is called an \emph{independent set} if no two vertices in $X$ are adjacent in $G$. We denote by $\alpha(G)$ the maximum size of an independent set of $G$.
   
    \item A subset $C \subseteq V(G)$ is called a \emph{vertex cover} (or simply a \emph{cover}) of $G$ if it meets every edge of $G$; that is, $C \cap e \neq \emptyset$ for all $e \in E(G)$.

    \item A \emph{path} $P_n$ on $n$ vertices is the graph with vertex set $V(P_n) = \{1, \ldots, n\}$ and edge set
    \[
    E(P_n) = \{\{1,2\}, \{2,3\}, \ldots, \{n-1,n\}\}.
    \]
    
    \item A \emph{cycle} $C_n$ on $n$ vertices is the graph with vertex set $V(C_n) = \{1, \ldots, n\}$ and edge set
    \[
    E(C_n) = E(P_n) \cup \{\{1,n\}\}.
    \]
    \item A graph $G$ is called \emph{bipartite} if its vertex set can be partitioned as $V(G) = U \cup V$ with $U \cap V = \emptyset$ such that every edge in $E(G)$ has one endpoint in $U$ and the other in $V$.

\item A graph $G$ is called \emph{strongly non-bipartite} if each of its connected components is non-bipartite.
    
\item For $u,v \in V(G)$, the \emph{distance} between $u$ and $v$, denoted by $\operatorname{dist}_G(u,v)$, is the length of a shortest path connecting $u$ and $v$. For a positive integer $d$, $G^d$ denotes the $d$-th power graph of $G$, which has vertex set $V(G^d) = V(G)$ and edge set $E(G^d) = \{\{u,v\} \mid \operatorname{dist}_G(u,v) \le d\}$.

    \item A \emph{forest} is a graph containing no cycles. A \emph{tree} is a connected forest.
    
    \item A \emph{matching} in $G$ is a set of pairwise non-adjacent edges. The \emph{matching number} of $G$, denoted by $\operatorname{mat}(G)$, is the size of a maximum matching of $G$. A \emph{perfect matching} is a matching that covers every vertex of $G$.
    
    \item A graph $G$ is called \emph{factor-critical} if $G - v$ has a perfect matching for every $v \in V(G)$.
    
    \item A \emph{walk} in $G$ is a sequence of vertices $i_1, \ldots, i_r$ (with repetitions allowed) such that $\{i_1,i_2\}, \ldots, \{i_{r-1},i_r\}$ are edges of $G$. The vertices $i_1$ and $i_r$ are called the \emph{endpoints} of the walk. A walk is \emph{closed} if its endpoints coincide. The \emph{length} of a walk is the number of its edges. A walk is called \emph{odd} (resp.\ \emph{even}) if its length is odd (resp.\ even).
\end{enumerate}
\end{defn}

Throughout this paper, we denote by $S = \kk[x_1, \ldots, x_n]$ the polynomial ring over a field $\kk$, and by $\mm = (x_1, \ldots, x_n)$ its maximal homogeneous ideal. For a nonzero monomial $f \in S$, its \emph{support}, denoted by $\operatorname{supp}(f)$, is the set of all variables in $S$ that divide $f$. For each $i \in [n]$, we denote by $\deg_i(f)$ the exponent of $x_i$ in $f$, and by $\deg(f)$ the total degree of $f$. For any subset $X \subseteq [n]$, we define
\[
\deg_X(f) = \sum_{i \in X} \deg_i(f).
\]
For an $S$-module $M$, $\operatorname{Ass}(M)$ denotes the set of associated primes of $M$.

The \emph{edge ideal} of $G$, denoted by $I(G)$, is the ideal in $S = \kk[x_1, \ldots, x_n]$ defined by
\[
I(G) = (x_i x_j \mid \{i,j\} \in E(G)).
\]
We first establish the following lemma.

\begin{lem}\label{lem_index_0} 
Let $G$ be a simple connected graph. Then 
\[
\operatorname{vstab}(I(G)) = \min \{t \ge 1 \mid v(I(G)^t) = 2t-1\}.
\]
\end{lem}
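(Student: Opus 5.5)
The plan is to show that once $v(I^t) = 2t-1$ for some $t$, the same equality holds for $t+1$. Write $I = I(G)$. Combined with the result of Biswas, Mandal and Saha that $v(I^t) = 2t-1$ for $t \gg 0$, this shows that the set $\{t \mid v(I^t) = 2t-1\}$ is an up-closed interval $[t_1,\infty)$. It then follows that the function is linear exactly from $t_1$ onward, i.e.\ $\operatorname{vstab}(I) = t_1$. For this last point we also check that $v(I^{t_1-1}) \neq 2(t_1-1)-1$, which holds by the definition of $t_1$. Hence linearity with slope $2$ and intercept $-1$ cannot start earlier. Any linear function agreeing with $2t-1$ for large $t$ must be $2t-1$ itself, so a linear stretch starting earlier would have to contain $t_1-1$ with the value $2t_1-3$, which is excluded.

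The first ingredient is a lower bound $v(I^t) \ge 2t-1$ for all $t$. We argue as follows. If $I^t : f = P$ with $P$ prime and $f$ a monomial, then $f \notin I^t$. Moreover $P \ne S$, and $P$ contains some variable $x_i$, which we may assume lies in some edge $\{i,j\}$ since $G$ has no isolated vertices and $P\supseteq I$. Then $x_i f \in I^t$, so $\deg f \ge 2t-1$. Since $v$ is computed by monomials for monomial ideals (a known reduction, see \cite{CSTVV} or \cite{FS}), this gives $v(I^t)\ge 2t-1$.

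The second ingredient is the step $t \to t+1$. Suppose $I^t : f = P$ with $\deg f = 2t-1$ and $f$ a monomial. The natural candidate is $g = f\,x_ax_b$ for a suitable edge $\{a,b\}$, and we want $I^{t+1} : g = P$. The inclusion $I^t : f \subseteq I^{t+1}: f x_a x_b$ always holds, since $h f\in I^t$ implies $h f x_ax_b\in I^{t+1}$. So $P \subseteq I^{t+1}:g$. For the reverse inclusion we need the colon to be proper and prime. The choice of edge is the main obstacle. The plan is to choose $\{a,b\}$ so that $x_a \in P$ and the edge lies inside the "support structure" of $f$. Concretely, write $f = f' \cdot m$ with $x_i f \in I^t$, and choose an edge among those used in a factorization of $x_i f$. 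We then use the known fact, valid for edge ideals where $I^{t+1}:e = I^t$ for an edge monomial $e$ (from even-connection theory, Banerjee), in the form $I^{t+1} : (f x_a x_b) = (I^{t+1}: x_ax_b):f = I^t : f = P$.

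So the key step is to invoke Banerjee's result $I(G)^{t+1} : x_ax_b = I(G)^t$ for every edge $\{a,b\}$ and every $t\ge1$, which holds for all graphs. With it, $g = f x_a x_b$ for any edge works and has degree $2t+1 = 2(t+1)-1$. Thus $v(I^{t+1}) \le 2(t+1)-1$, and equality follows from the lower bound. Therefore the expected difficulty reduces to citing or justifying this colon identity correctly, together with the reduction of $v$-numbers of monomial ideals to monomial witnesses.
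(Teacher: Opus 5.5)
Your overall structure matches the paper's: the lower bound $v(I^t)\ge 2t-1$ from $x_if\in I^t$, the step ``$v(I^t)=2t-1\Rightarrow v(I^{t+1})=2t+1$'', and the result of Biswas--Mandal--Saha. Your lower bound and your explanation of why up-closedness pins down $\operatorname{vstab}$ are fine.

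\textbf{The gap is in the upper bound $v(I^{t+1})\le 2t+1$.} The identity $I(G)^{t+1}:x_ax_b=I(G)^t$ for every edge $\{a,b\}$ is false. Banerjee's even-connection theory (Lemma~\ref{lem_even}) says precisely that $I^{t+1}:x_ax_b$ is in general strictly larger than $I^t$.

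\emph{Counterexample to the identity.} For the triangle, $I=(x_1x_2,x_1x_3,x_2x_3)$, we have $x_1^2x_2x_3=(x_1x_2)(x_1x_3)\in I^2$. So $x_1^2\in I^2:x_2x_3$, but $x_1^2\notin I$. Similarly, for the path $P_4$ one gets $x_1x_4\in I^2:x_2x_3\setminus I$.

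\emph{Counterexample to ``any edge works''.} Take the triangle with $t=1$ and $f=x_1$, so $I:f=(x_2,x_3)=P$. The choice $g=x_1x_2x_3$ gives $I^2:g=(x_1,x_2,x_3)\neq P$. So your argument gives no reason why $I^{t+1}:g$ should equal $P$, or even be prime, for an arbitrary edge.

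\textbf{The fix.} What is true for edge ideals is the colon by the whole ideal, $I^{t+1}:I=I^t$ (Martinez-Bernal--Morey--Villarreal). With it, the step goes through by selecting a good edge rather than taking any edge:
\[
P=I^t:f=(I^{t+1}:I):f=I^{t+1}:fI=\bigcap_{\{a,b\}\in E(G)}\bigl(I^{t+1}:fx_ax_b\bigr).
\]
Each term of the intersection contains $P$. Suppose every term strictly contained $P$, and pick $h_{ab}$ in each term but outside $P$. Then $\prod h_{ab}$ lies in the intersection, which is $P$, contradicting primality. Hence some edge $\{a,b\}$ satisfies $I^{t+1}:fx_ax_b=P$, and $g=fx_ax_b$ has degree $2t+1$.

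This is exactly the inequality $v(I^{t+1})\le v(I^t)+2$, which the paper simply cites from \cite{YHC}. Once you replace your key step by it, your proof coincides with the paper's.
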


\begin{proof}
By the result of Biswas, Mandal, and Saha \cite{BMS}, $v(I(G)^t) = 2t-1$ for all sufficiently large $t$. Thus, it suffices to show that if $v(I(G)^t) = 2t-1$ for some $t$, then $v(I(G)^{t+1}) = 2t+1$. 

To this end, let $P \in \operatorname{Ass}(I(G)^{t+1})$ and let $f \in S$ be a monomial such that $I(G)^{t+1} : f = P$. For any $x_i \in P$, we have $x_i f \in I(G)^{t+1}$, which implies that $\deg(x_i f) \ge 2(t+1) = 2t+2$. Thus, $\deg(f) \ge 2t+1$, showing that $v(I(G)^{t+1}) \ge 2t+1$. 

On the other hand, by \cite[Theorem 4.3]{YHC}, we have $v(I(G)^{t+1}) \le v(I(G)^t) + 2 = (2t-1) + 2 = 2t+1$. Consequently, $v(I(G)^{t+1}) = 2t+1$, which yields the desired conclusion.
\end{proof}

\begin{lem}\label{lem_index_1} 
Let $G$ be a simple graph without isolated vertices, having $c$ connected components. Then 
\[
\operatorname{vstab}(I(G)) = \min \{t \ge 1 \mid v(I(G)^t) = 2t+c-2\}.
\]
\end{lem}

\begin{proof} 
Let $G_1, \ldots, G_c$ be the connected components of $G$. As in the proof of Lemma \ref{lem_index_0}, we have $v_{\mathfrak{p}_i}(I(G_i)^{t_i}) \ge 2t_i - 1$ for all associated primes $\mathfrak{p}_i$ of $I(G_i)^{t_i}$ and all integers $t_i \ge 1$. By \cite[Theorem 4.1]{FM}, this implies that $v_{\mathfrak{p}}(I(G)^t) \ge 2t+c-2$ for all $t \ge 1$ and all associated primes $\mathfrak{p}$ of $I(G)^t$. 

Furthermore, by \cite[Corollary 5.4]{FM}, $v(I(G)^t) = 2t + c - 2$ for all sufficiently large $t$. As in the case of a connected graph, applying \cite[Theorem 4.1]{YHC} implies that if this equality holds for some index $t$, then it also holds for $t+1$. The conclusion follows.
\end{proof}

We recall the following result of Banerjee \cite{B}.

\begin{lem}\label{lem_even}
Let $G$ be a simple graph, and let $I(G)$ denote its edge ideal. Let $e_1, \ldots, e_{t-1}$ be edges of $G$, and let $f$ be the product of the generators of $I(G)$ corresponding to the edges $e_1, \ldots, e_{t-1}$. Then $I(G)^t : f$ is generated by monomials of degree $2$. Moreover, if $x_u x_v \in I(G)^t : f$, then there exists an even walk
\[
p_0, p_1, \ldots, p_{2s+1}
\]
such that $u = p_0$, $v = p_{2s+1}$, and $\{p_{2i+1}, p_{2i+2}\}$ is one of the edges $e_1, \ldots, e_{t-1}$ for each $0 \le i \le s-1$.
\end{lem}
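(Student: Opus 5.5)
We prove Lemma~\ref{lem_even} by induction on $t$, tracking the standard even-connection construction. Write $I = I(G)$ and $f = m_1\cdots m_{t-1}$, where $m_k$ is the generator of the edge $e_k$. Every generator of $I^t$ is a product of $t$ edge generators and has degree $2t$, while $\deg f = 2(t-1)$. Hence every minimal monomial generator $g$ of $I^t:f$ satisfies $\deg(gf)\ge 2t$, so $\deg g \ge 2$.

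\textbf{Step 1: the two descriptions of the degree-$2$ part.} We first show that the monomials $x_ux_v$ in $I^t:f$ are exactly the pairs joined by an odd walk of the stated shape. One direction is a telescoping argument. Given a walk $u=p_0,p_1,\dots,p_{2s+1}=v$ whose edges $\{p_{2i+1},p_{2i+2}\}$ are distinct among the $e_k$, we have
\[
x_u x_v \prod_{i=0}^{s-1} x_{p_{2i+1}}x_{p_{2i+2}} = \prod_{i=0}^{s} x_{p_{2i}}x_{p_{2i+1}},
\]
and the right-hand side is a product of $s+1$ edges. Multiplying by the remaining $t-1-s$ factors of $f$ places $x_ux_vf$ in $I^t$. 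Repeated uses of the same $e_k$ in the walk require multiplicity care: we use a shortcut argument to remove loops and reduce to distinct edges.

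\textbf{Step 2: the converse, via a bipartite-multigraph argument.} Suppose $x_ux_vf = h\, m'_1\cdots m'_t$ with each $m'_j$ an edge monomial. Since degrees match, $h=1$ when the monomial $x_ux_vf$ has degree $2t$. Build a multigraph whose red edges are the $e_k$ (including $e_0$ given by the pair $\{u,v\}$ as a formal pair) and whose blue edges are the $m'_j$. Every vertex then has equal red and blue degree, except that $u$ and $v$ each carry one extra unit of blue degree. Alternating-walk decomposition yields a blue--red--blue--$\cdots$ walk from $u$ to $v$, which is exactly the required form with odd positions in the red edges $e_k$. This step is the main combinatorial point, and I would handle it as an Euler-type decomposition of the alternating multigraph.

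\textbf{Step 3: generation in degree $2$.} This is the harder claim. Take a monomial $g$ with $gf\in I^t$, and write $gf = h\, m'_1\cdots m'_t$. Cancel common edge factors between $f$ and the $m'_j$, then apply the same alternating decomposition to the red edges of $f$ against the blue edges $m'_j$. The imbalance vertices lie in $\operatorname{supp}(g)$. Since the blue side has one more edge than the red side, some alternating trail starts and ends with blue edges, and its two endpoints $a,b$ are imbalanced vertices whose variables divide $g$. If $a\ne b$, then $x_ax_b \mid g$. If $a=b$, the trail is closed and odd, and we take $x_a^2 \mid g$, noting that the walk from $a$ to $a$ still gives $x_a^2\in I^t:f$ by Step~1.

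Either way, $g$ is divisible by a degree-$2$ element of $I^t:f$, so $I^t:f$ is generated in degree $2$. I expect this step to be the main obstacle, specifically ensuring that the endpoints of the trail genuinely correspond to variables of $g$ (i.e.\ $x_ax_b$ divides $g$ with the right multiplicity when $a=b$). I would first try to handle this by careful degree bookkeeping at each vertex.
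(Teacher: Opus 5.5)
Your Steps~2 and~3 give a correct proof by a genuinely different route from the paper. The paper contains no argument for this lemma at all; it simply imports it from Banerjee.

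\textbf{What your route buys.} Decomposing the red multiset $e_1,\dots,e_{t-1}$ against the blue multiset $m'_1,\dots,m'_t$ into alternating trails proves degree-$2$ generation and the walk statement in one stroke. It also automatically yields walks that use each $e_k$ at most as often as it occurs in $f$, which is Banerjee's even-connection condition. Note that no induction on $t$ is actually used.

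\textbf{Closing the point you flagged in Step~3.} Build the decomposition by pairing, at every vertex $w$, as many red edge-ends with blue edge-ends as possible. Then:
\begin{itemize}
\item every closed trail alternates all the way around, so it is balanced;
\item whenever $\deg_B(w)-\deg_R(w)>0$, the unpaired ends at $w$ are exactly that many blue ends;
\item $\deg_B(w)-\deg_R(w)=\deg_w(g)-\deg_w(h)\le\deg_w(g)$.
\end{itemize}
Since $|B|-|R|=1$, and every trail other than a blue-ended open one contributes $0$ or $-1$ to this difference, some open trail begins and ends with blue edges. If both its ends lie at the same vertex $a$, then $a$ carries two unpaired blue ends, so $x_a^2\mid g$.

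\textbf{A fix in Step~2.} Drop the formal red edge $e_0=\{u,v\}$. With it, every vertex is balanced. Without it, $u$ and $v$ each have blue excess one (excess two at $u$ if $u=v$), and the unique open trail is the required walk.

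\textbf{A false claim in Step~1.} The ``shortcut'' reduction of a walk that reuses some $e_k$ beyond its multiplicity does not work in general, and this claim should be deleted. Shortcutting works when the repeated edge is traversed in the same direction, but not when it is traversed in opposite directions. Here is a counterexample:
\begin{itemize}
\item let $G$ have edges $ua,av,ab,bc,cd,db$, take $t=3$ and $f=(x_ax_b)(x_cx_d)$;
\item the walk $u,a,b,c,d,b,a,v$ has the stated shape;
\item yet $x_ux_vf\notin I(G)^3$, because this degree-$6$ monomial would require a perfect matching of $G$, while $u$ and $v$ both have $a$ as their only neighbor.
\end{itemize}
So your ``exactly'' characterization holds only with the multiplicity condition.

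This does not damage your proof. In Step~3 the trail produced by the decomposition uses each copy of each $e_k$ once. Hence the telescoping identity alone gives $x_ax_bf\in I(G)^t$, including the case $a=b$, and you should cite that identity rather than the shortcut.
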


\begin{defn} Let $G$ be a simple graph. We define 
\[
\kappa(G) = \min \left\{ |X| \ \middle|\ \begin{array}{l} X \text{ is a independent set, } G^2[X] \text{ is connected and } N_G(X) \text{ is a cover of } G\end{array} \right\}.
\]    
\end{defn}
We have the following lemma.

\begin{lem}\label{lem_kappa_1} 
Let $G$ be a simple connected graph and let $P$ be an associated prime of $I(G)$. Assume that there exists a monomial $f \in S$ such that $\deg(f) = 2t-1$ and $I(G)^t : f = P$. Then $t \ge \kappa(G)$.
\end{lem}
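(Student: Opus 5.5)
Write $P=(x_i \mid i\in C)$, where $C$ is a minimal vertex cover of $G$. The plan is to read off from $f$ an independent set $X$ with $|X|\le t$, $G^2[X]$ connected and $C\subseteq N_G(X)$. Then $N_G(X)$ is a cover and $t\ge |X|\ge\kappa(G)$.

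\emph{Step 1: normalize $f$.} Since $I(G)^t:f=P\neq S$, we have $f\notin I(G)^t$. Pick $i\in C$. Then $x_if\in I(G)^t$ and $\deg(x_if)=2t$, so $x_if$ is exactly a product of $t$ edge monomials. One of these edges, say $e_t=\{i,j\}$, contains $i$. Hence
\[
f=x_j g, \qquad g=\prod_{k=1}^{t-1}x_{e_k}.
\]
Put $J=I(G)^t:g$. By Lemma~\ref{lem_even}, $J$ is generated by quadrics $x_ux_v$, each given by an even walk $u=p_0,p_1,\dots,p_{2s+1}=v$ whose edges $\{p_{2r+1},p_{2r+2}\}$ lie among $e_1,\dots,e_{t-1}$. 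Since $P=J:x_j$ is generated by variables, we get
\[
C=\{v \mid x_jx_v\in J\}.
\]
So every $v\in C$ is joined to $j$ by such a walk.

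\emph{Step 2: define $X$.} Let $X$ be the set consisting of $j$ together with every vertex $p_{2r}$ ($r\ge1$) occurring on such an alternating walk starting at $j$. Each $v\in C$ is a neighbor of the last even vertex $p_{2s}$ of its walk (or of $j$ if $s=0$), so $C\subseteq N_G(X)$. Consecutive even vertices $p_{2r},p_{2r+2}$ are at distance at most $2$, via $p_{2r+1}$. Hence $G^2[X]$ is connected, as everything links back to $j$. Every element of $X\setminus\{j\}$ is an endpoint of some $e_k$. So if $X$ is independent, each listed edge contributes at most one vertex, and $|X|\le 1+(t-1)=t$.

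\emph{Step 3: independence of $X$ (main obstacle).} Suppose $w,w'\in X$ are adjacent. The walk $j\to w$ extends along the edge $\{w,w'\}$ to an odd walk of the allowed type ending at $w'$. Hence $x_jx_{w'}\in J$, i.e.\ $w'\in C$, and symmetrically $w\in C$. In the same way, every neighbor of any vertex of $X$ lies in $C$, so $N_G(X)\subseteq C$. The adjacency gives moreover an alternating closed walk through $j$, so $x_j^2\in J$ and $j\in C$. I would then show the reachable configuration is closed under extension. Then $X\subseteq C$ as well: each $x\in X$ is joined to $j$ by an even alternating walk, and the odd detour through $w,w'$ shifts its parity. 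This gives $N_G[X]\subseteq C$.

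Minimality of $C$ requires every vertex of $C$ to have a neighbor outside $C$. Vertices of $X$ violate this, and $X\neq\emptyset$, which gives the contradiction. The delicate point is the bookkeeping showing that the parity shift really places all of $X$ inside $C$. I would try to phrase this as: the set of vertices reachable from $j$ by alternating walks of both parities is closed under taking neighbors. If that fails, I would instead argue directly that $P$ would then properly contain a prime $(x_k\mid k\in C\setminus\{x\})\supseteq I(G)$ for some $x\in X$, contradicting the minimality of $P$.
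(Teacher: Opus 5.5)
Your proof goes through, but Step 3 should be finished differently from the way you propose.

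\textbf{The parity-shift part of Step 3 is wrong as justified.} Concatenating the walk $j\to w$, the edge $\{w,w'\}$, and the reverse of $j\to w'$ does give a closed walk. However, it may traverse some listed edge more often than that edge occurs in $g$, and then it does not certify $x_j^2\in J$. So the claims $j\in C$ and $X\subseteq C$ do not follow from the adjacency.

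For example, take the graph with edges $ja,ab,bc,bd,cd$, with $f=x_jx_ax_bx_cx_d$, $t=3$ and $g=(x_ax_b)(x_cx_d)$. One gets $X=\{j,b,c,d\}$, which contains the edge $\{c,d\}$, yet $x_jf\notin I(G)^3$. Here $I(G)^3:f=(x_a,x_b,x_c,x_d)$ is embedded, so this does not contradict the lemma, but your intended implication fails.

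\textbf{What you already have suffices.} You showed $w\in C$ and $N_G(w)\subseteq N_G(X)\subseteq C$. Hence $C\setminus\{w\}$ is still a vertex cover, contradicting minimality of $C$. This is exactly your fallback with $x=w$; drop the parity-shift detour and finish this way.

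\textbf{A point to make explicit.} Both $w'\in C$ and $N_G(X)\subseteq C$ use the converse of Lemma~\ref{lem_even}. The paper does not state this converse. It holds only for walks that use each listed edge at most as many times as it occurs among $e_1,\dots,e_{t-1}$ (Banerjee's even-connection). For such walks it follows from the identity $x_{p_0}x_{p_{2s+1}}\prod_{l<s}x_{p_{2l+1}}x_{p_{2l+2}}=\prod_{l\le s}x_{p_{2l}}x_{p_{2l+1}}$. So define $X$ through such walks, which Banerjee's theorem supplies. Note that a prefix of such a walk followed by one free edge keeps the property.

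\textbf{Comparison with the paper.} With this repair, your route genuinely differs from the paper's.

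The paper never has to prove independence. It uses [CHJV, Lemma~2.1] to get $\deg_C(f)=t-1$. Then each of the $t$ edges in a factorization of $x_cf$ has exactly one endpoint in $C$. So the listed edges lie in $G[C,V(G)\setminus C]$, $u\notin C$, and every alternating walk from $u$ has its even-position vertices in the independent set $V(G)\setminus C$. Independence of the reachable set $Y$ and $N_G(Y)\subseteq C$ are then automatic, and only the forward direction of Lemma~\ref{lem_even} is needed.

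You instead trade the degree identity for minimality of $C$ plus the converse of the even-connection criterion. This avoids the external lemma. It also shows that independence of the reachable set is exactly what can fail for embedded primes, as in the example above; that phenomenon is what produces the sets $U$ in Section~\ref{sec_embedded}.
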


\begin{proof} 
Since $P$ is a minimal prime, it corresponds to a minimal vertex cover $C$ of $G$. By \cite[Lemma 2.1]{CHJV}, we deduce that $\deg_C(f) = t-1$. Let $X = V(G) \setminus C$; then $\deg_X(f) = t$. 

Now, for every variable $x_c \in P$, we have $x_c f \in I(G)^t$. Hence, $x_c f$ must be divisible by a product of $t$ edge-generators of $I(G)$, which corresponds to choosing $t$ edges in the bipartite subgraph $B_C = G[C,X]$ consisting of all edges between $C$ and $X$. In particular, $x_c f$ must be divisible by an edge monomial containing $x_c$, say $x_c x_u \in I(G)$ for some $u \in X$. Thus, we may write $f = x_u m_1 \cdots m_{t-1}$, where each $m_i$ is a monomial generator corresponding to an edge $e_i \in E(B_C)$. Let $M$ denote the set of edges $\{e_1, \ldots, e_{t-1}\}$. We define a sequence of sets $Y_r$ inductively as follows. First, set $Y_0 = \{u\}$. Then, for each $r \ge 0$, define
\[
Y_{r+1} = \{y \in X \mid \text{there exist } c \in C \text{ and } y_r \in Y_r \text{ such that } \{y_r,c\} \in E(G) \text{ and } \{c,y\} \in M\},
\]
and finally, set $Y = \bigcup_{r \ge 0} Y_r$. 

By construction, adding each new vertex to $Y$ requires at least one distinct edge factor from $M$. Hence, $|Y| \le 1 + (t-1) = t$. We now show that $G^2[Y]$ is connected. Indeed, for every $y \in Y \setminus \{u\}$, there exists some $y_{r-1} \in Y_{r-1}$ connected to $y$ via a path of length $2$ in $G$ (passing through a vertex $c \in C$), which corresponds to an edge in $G^2$. Thus, $G^2[Y]$ is connected.

We next claim that $N_G(Y) = C$. Take any $c \in C$. Since $I(G)^t : f = P$, Lemma~\ref{lem_even} implies that $x_c f \in I(G)^t$, so either $c$ is a neighbor of $u$, or there exists an alternating path $u, c_1, y_1, c_2, y_2, \ldots, y_r, c$ in $G$. By definition, $y_1, y_2, \ldots, y_r \in Y$, and hence $c \in N_G(Y)$. Since $Y \subseteq X$ and $X$ is an independent set, we have $N_G(Y) \subseteq C$. Therefore, $N_G(Y) = C$, which means $N_G(Y)$ is a vertex cover of $G$. By the definition of $\kappa(G)$, we conclude that $t \ge |Y| \ge \kappa(G)$.
\end{proof}

\begin{lem}\label{lem_kappa_2} 
Let $G$ be a simple connected graph. Then $\operatorname{vstab}(I(G)) \le \kappa(G)$.
\end{lem}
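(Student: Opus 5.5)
By Lemma~\ref{lem_index_0}, it suffices to find, for $t=\kappa(G)$, an associated prime $P$ of $I(G)^t$ and a monomial $f$ of degree $2t-1$ with $I(G)^t:f=P$. Then $v(I(G)^t)\le 2t-1$. The reverse inequality $v(I(G)^t)\ge 2t-1$ holds for every $t$ by the degree argument in the proof of Lemma~\ref{lem_index_0}, so we get $v(I(G)^t)=2t-1$ and hence $\operatorname{vstab}(I(G))\le t$.

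\textbf{Setup.} Fix an independent set $X$ with $|X|=t=\kappa(G)$, $G^2[X]$ connected and $N_G(X)$ a vertex cover of $G$.
\begin{itemize}
\item Set $C=N_G(X)$. Since $X$ is independent, $C\cap X=\emptyset$, and every edge of $G$ meets $C$.
\item Let $P=(x_c\mid c\in C)$. This need not be a minimal prime, but $P$ contains $I(G)$, and we will show directly that it equals $I(G)^t:f$, so it is associated to $I(G)^t$.
\end{itemize}

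\textbf{Construction of $f$.} Since $G^2[X]$ is connected, pick a spanning tree $T$ of $G^2[X]$ rooted at some $u\in X$. Each tree edge $\{y,y'\}$ with $y'$ the child joins vertices at distance exactly $2$ in $G$, because $X$ is independent. So we may choose $c_{y'}\in C$ adjacent to both $y$ and $y'$. Put
\[
f=x_u\prod_{y'\in X\setminus\{u\}} x_{c_{y'}}x_{y'},
\]
a monomial of degree $1+2(t-1)=2t-1$. It is $x_u$ times a product of $t-1$ edge generators $e_{y'}=\{c_{y'},y'\}$.

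\textbf{Proof that $I(G)^t:f=P$.}
\begin{itemize}
\item \emph{$P\subseteq I(G)^t:f$.} Take $c\in C$. Then $c$ is adjacent to some $z\in X$. Follow the tree path from $u$ to $z$, say $u=y_0,y_1,\dots,y_r=z$. This gives an alternating walk
\[
u,\ c_{y_1},\ y_1,\ c_{y_2},\ y_2,\ \dots,\ y_r,\ c .
\]
Re-pair the factors: $x_ux_{c_{y_1}}$, $x_{y_1}x_{c_{y_2}}$, $\dots$, $x_{y_r}x_c$. This shows that $x_c$ times the product of the edges $e_{y_1},\dots,e_{y_r}$ times $x_u$ is a product of $r+1$ edges. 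The remaining $t-1-r$ edge factors of $f$ are untouched, so $x_cf\in I(G)^t$. One checks that the $c_{y_i}$ used along the path are distinct edge factors, which they are, since they are indexed by distinct children.
\item \emph{$I(G)^t:f\subseteq P$.} By Lemma~\ref{lem_even} applied to $x_uf\in I(G)^t:(\text{edges})$, or more directly: suppose a monomial $g$ with no variable in $P$ satisfies $gf\in I(G)^t$. Then $\operatorname{supp}(g)\subseteq V\setminus C$. Every edge meets $C$, so each of the $t$ edges dividing $gf$ uses a $C$-variable. But $\deg_C(gf)=\deg_C(f)=t-1$, a contradiction. Since $I(G)^t:f$ is a monomial ideal, this gives the inclusion.
\end{itemize}
Hence $I(G)^t:f=P$, which proves the claim.

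The main subtle step is the inclusion $P\subseteq I(G)^t:f$: the walk-rerouting must use each edge factor of $f$ at most once. Choosing paths inside a rooted spanning tree guarantees this, since the edge factors along a root-to-$z$ path are distinct.
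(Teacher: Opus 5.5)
Your proposal is correct and is essentially the paper's proof: the same spanning tree of $G^2[X]$, the same monomial $f$ of degree $2t-1$, and the same prime $P_C$ with $C=N_G(X)$. Your version is somewhat more rigorous on both inclusions: you re-pair the factors directly rather than invoking the converse of Lemma~\ref{lem_even}, and you use the count $\deg_C(gf)=\deg_C(f)=t-1<t$ rather than an even-connection argument that only examines variables. Your aside that $P$ need not be a minimal prime is actually false, though harmless since you never use minimality: every $c\in C$ has a neighbor in the independent set $X\subseteq V(G)\setminus C$, so the cover $C=N_G(X)$ is automatically minimal.
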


\begin{proof}
Let $X \subseteq V(G)$ be an independent set of $G$ such that $G^2[X]$ is connected, $C = N_G(X)$ is a vertex cover of $G$, and $|X| = \kappa(G)$. Set $t = |X| = \kappa(G)$. Fix a vertex $v_0 \in X$, and consider a spanning tree $T$ of $G^2[X]$ rooted at $v_0$. For every $v \in X \setminus \{v_0\}$, let $p(v)$ denote its parent in $T$. Since $X$ is an independent set, the definition of $G^2$ implies that for every such $v$, there exists a vertex $c \in C$ such that $\{p(v), c\}$ and $\{c, v\}$ are edges of $G$. We denote this vertex by $c(v) \in C$. 

For each $v \in X \setminus \{v_0\}$, let $m_v = x_{c(v)} x_v$ denote the monomial generator corresponding to the edge $\{c(v), v\} \in E(G)$, and set 
\[
f = x_{v_0} \prod_{v \in X \setminus \{v_0\}} m_v.
\]
Note that $f \in S$ is a monomial with $\deg(f) = 1 + 2(t - 1) = 2t - 1$.

We claim that $I(G)^t : f = P_C$, where $P_C = (x_c \mid c \in C)$ is the minimal prime ideal corresponding to the vertex cover $C$. 

To show that $P_C \subseteq I(G)^t : f$, take any variable $x_c$ with $c \in C$. Since $C = N_G(X)$, there exists a vertex $v \in X$ such that $c \in N_G(v)$. Let $v_0, v_1, \ldots, v_q = v$ be the unique path from $v_0$ to $v$ in $T$. Then $v_0$ and $c$ are connected via the alternating path of vertices
\[
v_0, \, c(v_1), \, v_1, \, \ldots, \, c(v_q), \, v_q, \, c
\]
in $G$. By Lemma~\ref{lem_even}, this even walk implies that $x_c f \in I(G)^t$, so $P_C \subseteq I(G)^t : f$. Furthermore, any vertex $u \in V(G)$ whose corresponding variable $x_u$ lies in $I(G)^t : f$ must be reachable from $v_0$ via an even connection using the edge set $\{\{c(v),v\} \mid v \in X \setminus \{v_0\}\}$. By construction, such a vertex $u$ must be a neighbor of some vertex in $X$, which implies $u \in C$. Hence, $I(G)^t : f = P_C$, which implies $\operatorname{vstab}(I(G)) \le t = \kappa(G)$.
\end{proof}

\section{Stability of $v$-number of powers at embedded primes}\label{sec_embedded}
A simple connected graph $G$ is called \emph{factor-critical} if $G - v$ has a perfect matching for every vertex $v \in V(G)$. Lov\'asz \cite{L} proved that a graph is factor-critical if and only if it has an odd ear decomposition. Lam and Trung \cite{LT} generalized this notion by introducing generalized ear decompositions, which account for ear decompositions of certain parallelizations of $G$. First, we recall the definition of a parallelization.

\begin{defn}
Let $G$ be a simple graph on $V(G) = [n]$ and let $\mathbf{a} = (a_1, \ldots, a_n) \in \mathbb{N}^n$ be a vector of positive integers. The \emph{parallelization} of $G$ with respect to $\mathbf{a}$, denoted by $G_{\mathbf{a}}$, is the graph obtained by replicating each vertex $i \in V(G)$ into $a_i$ copies, denoted $i_1, \ldots, i_{a_i}$, such that $\{i_k, j_\ell\} \in E(G_{\mathbf{a}})$ if and only if $\{i,j\} \in E(G)$.
\end{defn}

We refer the reader to Lam and Trung \cite{LT} for a full discussion on generalized ear decompositions. For our purposes, we state the following characterization of the invariant $\mu^*(G)$ and adopt it as our definition. A result of Lam and Trung \cite{LT} shows that this formulation is equivalent to the invariant defined via generalized ear decompositions.

\begin{defn} 
Let $G$ be a connected non-bipartite graph on $V(G) = [n]$. We define
\[
\mu^*(G) = \min \left\{ \frac{|\mathbf{a}| - 1}{2} \ \middle|\ \mathbf{a} \in \mathbb{N}_{>0}^n \text{ and } G_{\mathbf{a}} \text{ is factor-critical} \right\}.
\]
If $G$ is a strongly non-bipartite graph with connected components $G_1, \ldots, G_c$, we set $\mu^*(G) = \mu^*(G_1) + \cdots + \mu^*(G_c)$.
\end{defn}

\begin{lem}\label{lem_admissible} 
Let $(U,V)$ be an admissible pair of $G$. Then $\operatorname{vstab}(I(G)) \le \lambda(U,V)$. 
\end{lem}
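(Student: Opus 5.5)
By Lemma~\ref{lem_index_0}, it suffices to produce, for $t=\lambda(U,V)$, an associated prime $P$ of $I(G)^t$ and a monomial $f$ of degree $2t-1$ with $I(G)^t:f=P$. This gives $v(I(G)^t)\le 2t-1$. The reverse inequality $v(I(G)^t)\ge 2t-1$ always holds, as shown in the proof of Lemma~\ref{lem_index_0}, so we then get equality. The case $U=\emptyset$ is exactly Lemma~\ref{lem_kappa_2}, since then $V$ is an independent set with $G^2[V]$ connected and $N_G(V)$ a cover, so $t=|V|\ge\kappa(G)$. I therefore assume $U\neq\emptyset$. The target prime is $P=P_F=(x_i\mid i\in F)$ with $F=N_G[U]\cup N_G(V)$. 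By the Lam--Trung description, $P_F$ is associated to $I(G)^t$ for large $t$, and in particular it should be for $t\ge \mu^*(G[U])+c(G[U])$.

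\textbf{Construction of the witness.} Let $U_1,\dots,U_k$ be the components of $G[U]$, with $k=c(G[U])$. For each $j$, choose $\mathbf a^{(j)}$ attaining $\mu^*(G[U_j])$, so that $G[U_j]_{\mathbf a^{(j)}}$ is factor-critical on $2\mu^*(G[U_j])+1$ vertices. Removing one copy of a chosen root $r_j$ leaves a perfect matching $M_j$ of $\mu^*(G[U_j])$ edges. Project $M_j$ to a multiset of edges of $G[U_j]$ and let $g_j$ be the product of the corresponding edge monomials. Then $\deg g_j=2\mu^*(G[U_j])$, and every vertex of $U_j$ appears in $g_j$ with multiplicity $a_i$, or $a_{r_j}-1$ at the root. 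This contributes $\sum_j 2\mu^*(G[U_j])$ to the degree. Next, connect the pieces $U_1,\dots,U_k$ and the vertices of $V$ using a spanning tree $T$ of $G^2[U\cup V]$ contracted along the components $U_j$. Root the tree at $r_1$, or at a vertex of $V$ if $\mu^*$ is unavailable. For each remaining component and each vertex $w\in V$, use an edge $\{c,w\}$ of $G$, where $c$ is the middle vertex of a length-$2$ connection to its parent, exactly as in the proof of Lemma~\ref{lem_kappa_2}. For $U_j$ the relevant vertex is $r_j$ (or an adjacent vertex of $U_j$). Set
\[
f=x_{r_1}\prod_j g_j\prod_{\text{non-root }U_j} x_{c_j}x_{r_j}\prod_{w\in V\setminus\{\text{root}\}} x_{c_w}x_w .
\]
Then $\deg f=1+2\mu^*(G[U])+2(k-1)+2|V|=2\lambda(U,V)-1$.

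\textbf{Verification that $I(G)^t:f=P_F$.} For the inclusion $P_F\subseteq I(G)^t:f$, I use Lemma~\ref{lem_even} with even walks starting from $x_{r_1}$. Factor-criticality of the parallelization gives, for every vertex $u\in U_j$, an alternating walk from $r_j$ to $u$ of the appropriate parity inside $G[U_j]$, together with an odd closed walk. The odd closed walk lets us reach every vertex of $U_j$ and every neighbor of $U_j$ by an even walk ending in an edge outside $M$. This gives $x_i\in I(G)^t:f$ for all $i\in N_G[U]$. The tree edges carry the walks between components and to the vertices $w\in V$, and from $w$ they reach $N_G(w)$, as in Lemma~\ref{lem_kappa_2}.

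For the inclusion $I(G)^t:f\subseteq P_F$, I must show two things. First, no $x_i$ with $i\notin F$ lies in the colon. Second, the degree-2 generators of the colon all lie in $P_F$. Since $F$ is a cover, every edge monomial meets $F$, so any degree-2 generator $x_ux_v$ coming from an edge lies in $P_F$. If instead $x_ux_v$ comes from an even walk, its endpoints must lie in $F$. The reason is that the walks only traverse vertices of $U$, vertices of $V$, and the connector vertices $c\in N_G(U)\cup N_G(V)\subseteq F$, and $N_G(U)\cap V=\emptyset$ prevents a vertex of $V$ from being adjacent to $U$-vertices in a way that escapes $F$.

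\textbf{Main obstacle.} The hard step is making Lemma~\ref{lem_even} applicable, since $f$ is not exactly a product of $t-1$ edges times one variable: the parallelization multiplicities create repeated factors. I would handle this by writing $f=x_{r_1}\cdot(\text{product of } t-1 \text{ edge monomials})$, which does hold by construction. I would then argue that the even-walk characterization, combined with factor-criticality (via the Gallai--Edmonds structure, as in Lam--Trung), gives precisely $N_G[U]$ from inside each $U_j$.
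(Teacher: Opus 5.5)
\textbf{There is a genuine gap in the inclusion $I(G)^t:f\subseteq P_F$.} Your witness $f$ is the paper's: $x_{r_1}$ times your $t-1$ edge monomials is exactly $x^{\mathbf a_1}\prod_v m_v$, built along a spanning tree of the contracted square graph. Your degree count is also right.

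For variables $x_i$ with $i\notin F$ you only state what must be shown. For quadratic generators your reason is false. An even walk does traverse vertices of $\operatorname{supp}(f)$, but its endpoints $p_0,p_{2s+1}$ are arbitrary neighbours of $p_1,p_{2s}$, and a connector vertex can have neighbours outside $F$.

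Here is an example. Let $G$ consist of triangles on $\{1,2,3\}$ and $\{5,6,7\}$ together with the edges $\{3,4\},\{4,5\},\{4,8\}$. Take $U=\{1,2,3,5,6,7\}$ and $V=\emptyset$. Then $F=\{1,\dots,7\}$, $t=4$, and your construction gives $f=x_3\cdot x_1x_2\cdot x_4x_5\cdot x_6x_7$. The walk $8,4,5,6$ is an even connection, so $x_8x_6\in I(G)^4:(x_1x_2x_4x_5x_6x_7)$, although $8\notin F$.

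What is true is that at least one endpoint lies in $F$. That is a global fact which tracing walks locally does not deliver. With Lemma~\ref{lem_even} as quoted you could not even exclude $x_8$. The reason is that $3,4,5,6,7,5,4,8$ is an even walk from $r_1=3$ to $8$; it is ruled out only by Banerjee's multiplicity condition, since it uses the factor $x_4x_5$ twice, and the quoted lemma omits that condition.

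\textbf{The missing idea is a Tutte--Berge count with the connectors as barrier.}
\begin{enumerate}
\item Let $g$ be any monomial supported on $V(G)\setminus F$. Let $K$ be the parallelization of $G$ by the exponent vector of $gf$, so that $gf\in I(G)^t$ if and only if $\operatorname{mat}(K)\ge t$.
\item Let $S$ be the set of copies of connector vertices in $K$. Then $|S|=k-1+|V|$.
\item Since $F$ is a cover, $V(G)\setminus F$ is independent. It contains $V$, because $U\subseteq N_G(U)$ and condition (3) holds.
\item Consequently each $U_j$, each vertex of $V$ and each vertex of $g$ meets the rest of $\operatorname{supp}(gf)$ only in connectors.
\item Hence $K-S$ has at least $k+|V|+\deg g$ odd components: the odd blocks $G[U_j]_{\mathbf a_j}$ and the singletons. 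Therefore
\[
\operatorname{mat}(K)\le\tfrac12\bigl((2t-1+\deg g)-(k+|V|+\deg g)+(k-1+|V|)\bigr)=t-1,
\]
so $gf\notin I(G)^t$.
\end{enumerate}
This settles the inclusion outright. The paper's one-line justification of this step claims that vertices outside $C$ are not adjacent to $\operatorname{supp}(f)$. That claim fails on the same example, since $8$ is adjacent to $4$. So this count is what is needed there as well.

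\textbf{Smaller points on the other inclusion.} For $P_F\subseteq I(G)^t:f$, your parity flip along an odd closed walk can likewise reuse a factor edge more often than it divides $f$. The correct walk statement is that in the factor-critical graph $G[U_j]_{\mathbf a_j}$, every vertex ends an even $M_j$-alternating path from the exposed copy of $r_j$.

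The paper's direct factorization avoids walks entirely. For $c\in N_G[U_j]$, pick $u\in U_j$ adjacent to $c$. Then $x_cx^{\mathbf a_j}=(x_cx_u)\,x^{\mathbf a_j-\mathbf e_u}\in I(G)^{\mu^*(G[U_j])+1}$. The leftover connector variable is then passed up the tree by induction.

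Finally, your appeal to Lam--Trung is unnecessary. Once $I(G)^t:f=P_F$ is established, $P_F$ is already an associated prime.
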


\begin{proof} 
By Lemma~\ref{lem_kappa_2}, we may assume that $U$ is non-empty. Let $U_1, \ldots, U_k$ be the connected components of $G[U]$. For each $1 \le i \le k$, let $\mathbf{a}_i \in \mathbb{N}_{>0}^{|U_i|}$ be an exponent vector such that $G[U_i]_{\mathbf{a}_i}$ is factor-critical and $\frac{|\mathbf{a}_i| - 1}{2} = \mu^*(G[U_i])$. 

Since $G^2[U \cup V]$ is connected, $V$ is an independent set, and $N_G[U] \cap V = \emptyset$, contracting each component $U_i$ to a single node $[U_i]$ yields a contracted graph $H$ where $X = \{[U_1], \ldots, [U_k]\} \cup V$ is an independent set and $H^2[X]$ is connected.

Consider a spanning tree $T$ of $H^2[X]$ rooted at $[U_1]$. For every node $v \in X \setminus \{[U_1]\}$, let $p(v)$ denote its parent in $T$. By definition of $H^2[X]$, there exists a vertex $c(v) \in V(G) \setminus (U \cup V)$ such that $c(v) \in N_G(p(v))$ and $c(v) \in N_G(v)$ (where $N_G([U_j]) = N_G(U_j)$).

Now, for each $v \in X \setminus \{[U_1]\}$, we define the monomial $m_v \in S$ as:
\[
m_v = 
\begin{cases}
x_{c(v)} x_v & \text{if } v \in V, \\
x_{c(v)} x^{\mathbf{a}_j} & \text{if } v = [U_j] \text{ for some } 1 \le j \le k.
\end{cases}
\]
Set $f = x^{\mathbf{a}_1} \prod_{v \in X \setminus \{[U_1]\}} m_v$. The degree of $f$ is given by:
\[
\deg(f) = \sum_{i=1}^k |\mathbf{a}_i| + (k - 1) + 2|V| = 2 \left( \sum_{i=1}^k \mu^*(G[U_i]) + k + |V| \right) - 1 = 2\lambda(U,V) - 1.
\]

For each $v \in X$, define $\mu^*(v) = 1$ when $v \in V$, and $\mu^*([U_j]) = \mu^*(G[U_j]) + 1$ when $v = [U_j]$. 

Notice that for every node $v \in X \setminus \{[U_1]\}$:
\begin{itemize}
    \item If $v \in V$, then $m_v = x_{c(v)} x_v \in I(G) = I(G)^{\mu^*(v)}$ since $\{c(v), v\} \in E(G)$.
    \item If $v = [U_j]$, choose $u \in U_j$ such that $c(v) \in N_G(u)$. Writing $m_v = (x_{c(v)} x_u) x^{\mathbf{a}_j'}$, the factor-criticality of $G[U_j]_{\mathbf{a}_j}$ implies $x^{\mathbf{a}_j'} \in I(G)^{\mu^*(G[U_j])}$, so $m_v \in I(G)^{1 + \mu^*(G[U_j])} = I(G)^{\mu^*([U_j])}$.
\end{itemize}

Set $t = \lambda(U,V)$. We claim that $I(G)^t : f = P_C$, where $C = N_G[U] \cup N_G(V)$ is the vertex cover of $G$ associated with the prime ideal $P_C = (x_c \mid c \in C)$. 

First, if $c \notin C$, then $c \notin N_G(\operatorname{supp}(f))$, which implies $x_c f \notin I(G)^t$. Thus, $I(G)^t : f \subseteq P_C$. 

To show the reverse inclusion $P_C \subseteq I(G)^t : f$, take any $x_c$ with $c \in C$. By definition, there exists a node $v \in X$ such that $c \in N_G(v)$ (if $v \in V$) or $c \in N_G[U_j]$ (if $v = [U_j]$). Let $[U_1] = v_0, v_1, \ldots, v_q = v$ be the path from $[U_1]$ to $v$ in $T$. We prove by induction on $q \ge 0$ that 
\[
x_c x^{\mathbf{a}_1} \prod_{i=1}^q m_{v_i} \in I(G)^s, \quad \text{where } s = \sum_{i=0}^q \mu^*(v_i).
\]
\begin{itemize}
    \item Base case ($q = 0$): Here $v = [U_1]$ and $c \in N_G[U_1]$. Since $G[U_1]$ is a connected non-bipartite component, $c$ has a neighbor $u \in U_1$ (whether $c \in U_1$ or $c \in N_G(U_1)$). Thus $x_c x_u \in I(G)$. Writing $x_c x^{\mathbf{a}_1} = (x_c x_u) x^{\mathbf{a}_1'}$, factor-criticality yields $x^{\mathbf{a}_1'} \in I(G)^{\mu^*(G[U_1])}$, whence
    \[
    x_c x^{\mathbf{a}_1} \in I(G)^{1 + \mu^*(G[U_1])} = I(G)^{\mu^*([U_1])}.
    \]

    \item Inductive step ($q \ge 1$): If $v_q \in V$, then $x_c m_{v_q} = x_c x_{c(v_q)} x_{v_q} = (x_c x_{v_q}) x_{c(v_q)} \in I(G) \cdot x_{c(v_q)}$. The remaining variable $x_{c(v_q)}$ attaches to the parent node $p(v_q) = v_{q-1}$, allowing us to apply the induction hypothesis to the path of length $q - 1$. If $v_q = [U_j]$, then $c \in N_G[U_j]$. By the same argument as the base case, $x_c x^{\mathbf{a}_j} \in I(G)^{\mu^*([U_j])}$. Factoring out these generators leaves $x_{c([U_j])}$, which attaches to $p([U_j]) = v_{q-1}$, completing the induction step.
\end{itemize}
Thus $x_c f \in I(G)^t$, so $P_C \subseteq I(G)^t : f$, giving $I(G)^t : f = P_C$. By Lemma~\ref{lem_index_0}, $\operatorname{vstab}(I(G)) \le t = \lambda(U,V)$. The conclusion follows.
\end{proof}

The main ingredient for the reverse conclusion is the Gallai--Edmonds Structure Theorem, which we recall now. Let $G$ be a simple graph. Denote by $D(G)$ the set of all vertices in $G$ that are omitted by at least one maximum matching of $G$. Let $A(G)$ be the set of vertices in $V(G) \setminus D(G)$ that are adjacent to at least one vertex in $D(G)$, and set $C(G) = V(G) \setminus (D(G) \cup A(G))$. The partition $V(G) = D(G) \cup A(G) \cup C(G)$ is called the \emph{Gallai--Edmonds decomposition}. Gallai \cite{G1, G2} and Edmonds \cite{E} independently proved the following structural result.

\begin{thm}[Gallai--Edmonds Structure Theorem]\label{thm_GE}
Let $G$ be a simple graph, and let $D(G)$, $A(G)$, and $C(G)$ be defined as above. Then:
\begin{enumerate}
    \item Each connected component of the induced subgraph $G[D(G)]$ is factor-critical.
    \item The induced subgraph $G[C(G)]$ has a perfect matching.
    \item Every maximum matching $M$ of $G$ contains a near-perfect matching of each component of $G[D(G)]$, a perfect matching of each component of $G[C(G)]$, and matches all vertices of $A(G)$ to vertices in distinct components of $G[D(G)]$.
    \item $2\operatorname{mat}(G) = |V(G)| - c(D(G)) + |A(G)|$, where $c(D(G))$ denotes the number of connected components of $G[D(G)]$.
\end{enumerate}
\end{thm}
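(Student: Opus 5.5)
The statement is the classical Gallai--Edmonds Structure Theorem, which the paper quotes from \cite{G1, G2, E}. If I had to prove it, I would follow the standard route through two auxiliary facts and then a counting argument based on the Tutte--Berge inequality. Throughout I use alternating-path arguments with respect to maximum matchings: Berge's lemma and symmetric differences of two maximum matchings.

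The first auxiliary fact is \emph{Gallai's lemma}: if $H$ is connected and $D(H)=V(H)$, then $H$ is factor-critical. I would argue by contradiction. Choose a maximum matching $M$ missing two vertices $u,v$ at minimum distance, and take a shortest path $u,\dots,w,\dots,v$. The interior vertex $w$ is covered by $M$, and is missed by some maximum matching $N$. Comparing $M$ and $N$ along the component of $M\triangle N$ through $w$ yields a maximum matching missing $w$ together with $u$ or $v$. That contradicts minimality of the distance, so every maximum matching misses exactly one vertex.

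The second auxiliary fact is the \emph{stability lemma}, which I expect to be the main obstacle. It says that for $a\in A(G)$ we have $\operatorname{mat}(G-a)=\operatorname{mat}(G)-1$, $D(G-a)=D(G)$, $A(G-a)=A(G)\setminus\{a\}$ and $C(G-a)=C(G)$.
\begin{enumerate}
\item The equation $\operatorname{mat}(G-a)=\operatorname{mat}(G)-1$ holds because $a\notin D(G)$, so every maximum matching covers $a$.
\item For $D(G)\subseteq D(G-a)$, let $d\in D(G)$ be a neighbour of $a$. Take a maximum matching $M$ missing $d$; it covers $a$. Then $M$ with the edge at $a$ removed is a maximum matching of $G-a$ missing $d$. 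For a general $x\in D(G)$, I would move from a maximum matching missing $x$ to one missing the neighbour $d$ by an even alternating path that avoids $a$.
\item The reverse inclusion comes from the converse argument: a maximum matching of $G-a$ missing $x$, extended by an edge from $a$ to a $D$-neighbour reached by an alternating path, gives a maximum matching of $G$ missing $x$.
\end{enumerate}
The delicate part is the case analysis of the alternating paths through $a$. I would handle it by taking a maximum matching missing $d$ and examining the $M\triangle N$ component containing $a$.

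With these facts, I would delete the vertices of $A=A(G)$ one at a time and apply stability at each step. Writing $G'=G-A$, this gives $\operatorname{mat}(G')=\operatorname{mat}(G)-|A|$ and $D(G')=D(G)$. By definition, every neighbour of a $D$-vertex outside $D$ lies in $A$, so in $G'$ each component of $G[D]$ is a whole connected component of $G'$.
\begin{enumerate}
\item A maximum matching of $G'$ restricts to a maximum matching of each component. Hence each $D$-component has all its vertices missable, and Gallai's lemma gives (1).
\item The components of $G'$ inside $C$ have no missable vertex, so they have perfect matchings, giving (2).
\item Summing over components gives $\operatorname{mat}(G')=(|D|-c(D))/2+|C|/2$. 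Adding $|A|$ yields (4).
\end{enumerate}

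For (3), take a maximum matching $M$ of $G$ and use the Tutte--Berge bound with barrier $A$. The graph $G-A$ has at least $c(D)$ odd components, namely the factor-critical $D$-components. Each such component is left with an uncovered vertex unless that vertex is matched to a distinct vertex of $A$. Therefore
\[
|V(G)|-2|M|\ \ge\ c(D)-|A|.
\]
By (4) equality holds, which forces the following:
\begin{enumerate}
\item every vertex of $A$ is matched into a distinct $D$-component;
\item each $D$-component is matched near-perfectly internally;
\item $C$ is matched perfectly within itself.
\end{enumerate}
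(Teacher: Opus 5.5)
The paper gives no proof of this theorem. It quotes the result from Gallai and Edmonds and uses it only as a black box in the proof of Lemma~\ref{lem_construct_U_V}, so there is no argument of the paper's to compare against. Your outline is the standard textbook proof and is correct: Gallai's lemma, the stability lemma for $a\in A(G)$, deleting $A(G)$ to reach $G-A(G)$, and a Tutte--Berge count for (3). Your Gallai's lemma argument works as stated. The count in (3) forces all three conclusions once (1) and (4) are known, because each $D$-component is odd and is joined to the rest of the graph only through $A$.

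Two remarks on the stability lemma.

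\emph{The easy inclusion.} $D(G)\subseteq D(G-a)$ needs no alternating-path detour. The argument you give for the neighbour $d$ works for every $x\in D(G)$: a maximum matching of $G$ missing $x$ covers $a$. Deleting its edge at $a$ leaves a matching of $G-a$ of size $\operatorname{mat}(G)-1=\operatorname{mat}(G-a)$ that still misses $x$.

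\emph{The reverse inclusion.} This is the step you flag as the obstacle, and it closes along exactly the line you suggest. Let $N$ be a maximum matching of $G-a$ missing $x$, and let $M$ be a maximum matching of $G$ missing some $d\in D(G)\cap N_G(a)$. Every component of $M\triangle N$ avoiding $a$ has equally many $M$- and $N$-edges, by maximality of $N$ in $G-a$ and of $M$ in $G$. Since $|M|=|N|+1$, the component $P$ at $a$ is an odd path beginning and ending with $M$-edges. Now split into cases.
\begin{itemize}
\item If $x$ is $M$-exposed, you are done.
\item If $x$ is $M$-covered but is not the far endpoint of $P$, its component in $M\triangle N$ is an even alternating path. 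Switching $M$ along it gives a maximum matching of $G$ missing $x$.
\item If $x$ is the far endpoint of $P$, then $d\notin V(P)$, because every vertex of $P$ is $M$-covered. Hence $(M\triangle P)\cup\{ad\}$ is a maximum matching of $G$ missing $x$.
\end{itemize}

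With $D(G-a)=D(G)$ in hand, the identities $A(G-a)=A(G)\setminus\{a\}$ and $C(G-a)=C(G)$ follow directly from the definitions. This justifies deleting $A(G)$ one vertex at a time. So your route supplies a self-contained derivation of exactly what the paper imports by citation.
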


\begin{lem}\label{lem_construct_U_V} 
Let $G$ be a simple connected graph. Assume that there exists a monomial $f \in S$ such that $\deg(f) = 2t-1$ and $I(G)^t : f = P_C$ is an embedded associated prime of $I(G)^t$. Then, there exist a strongly non-bipartite subgraph $U \subseteq G$ and an independent set $V \subseteq V(G)$ such that $N_G[U] \cap V = \emptyset$, $G^2[U \cup V]$ is connected, $C = N_G[U] \cup N_G(V)$, and $\lambda(U,V) \le t$. 
\end{lem}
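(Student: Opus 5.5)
We follow the strategy of Lemma~\ref{lem_kappa_1}, with the Gallai--Edmonds Structure Theorem~\ref{thm_GE} supplying the strongly non-bipartite part. Let $f$ be a monomial with $\deg f = 2t-1$ and $I(G)^t:f = P_C$, where $C$ is a (non-minimal) vertex cover. Since $P_C \neq \mm$ would still be allowed, we set $W = V(G)\setminus C$; this is an independent set. For every $c\in C$ we have $x_cf\in I(G)^t$, and $\deg(x_cf)=2t$. Hence $x_cf$ is \emph{exactly} a product of $t$ edge monomials, so $f$ itself is $x_u$ times a product of $t-1$ edge monomials for a suitable $u$. Combining this with Lemma~\ref{lem_even} and \cite[Lemma 2.1]{CHJV}-type degree counting, we will get $\deg_W(f)\le$ the number of factors that meet $W$. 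The first step is to write $f = x_u m_1\cdots m_{t-1}$ and to form the multigraph $H_f$ on $\operatorname{supp}(f)$ whose vertex multiplicities are the exponents of $f$. In other words, $H_f$ is the parallelization $G[\operatorname{supp} f]_{\mathbf a}$ with $\mathbf a$ the exponent vector. The condition $x_cf\in I(G)^t$ then says that $H_f$ plus one copy of $c$ has a perfect matching; equivalently, $\operatorname{mat}(H_f)=t-1$ and $c$ is adjacent to a vertex missed by some maximum matching of $H_f$.

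The second step is to apply Theorem~\ref{thm_GE} to $H_f$. We obtain the Gallai--Edmonds decomposition $D\cup A\cup C'$ of $H_f$. Copies of a vertex are twins, so $D$, $A$ and $C'$ are unions of full copy classes, and we can project them back to vertex sets of $G$. We then define $U$ to be the union of the non-singleton (hence, by factor-criticality, non-bipartite) components of $H_f[D]$, projected to $G$, and $V$ to be the set of singleton components of $H_f[D]$. Singletons adjacent to each other are impossible because they lie in $D$ and are not joined. We expect $N_G[U]\cap V=\emptyset$ to follow from the fact that the components of $D$ are separated by $A$. From part (4) of Theorem~\ref{thm_GE}, together with $2(t-1)=2\operatorname{mat}(H_f)$, we get
\[
2t-1=\deg f = |V(H_f)| = 2(t-1) + c(D) - |A|,
\]
so $c(D) = |A|+1$. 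Each nonsingleton component $K$ of $D$ is a factor-critical parallelization of $G[\pi(K)]$, so $|K|\ge 2\mu^*(G[\pi(K)])+1$. Summing over components and using $|V(H_f)| \ge |D| + |A|$ will give $2t-1 \ge \sum_K (2\mu^*+1) + |V| + |A|$, that is, $2t-1\ge 2\mu^*(G[U]) + c(G[U]) + |V| + (c(D)-1)$. Since $c(D) = c(G[U])+|V|$, this yields $\lambda(U,V)\le t$. Some care is needed when a component of $G[U]$ arises from several components of $D$; we will handle this by merging them, which only decreases the count.

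The third step verifies the remaining conditions. To show $C = N_G[U]\cup N_G(V)$, take $c\in C$. By step one, $c$ is adjacent to a vertex that can be left unmatched, i.e.\ to a vertex of $D$; for $c\in U$ itself this is automatic because $U$ is non-bipartite and factor-critical after parallelization. Conversely, if $y\in N_G[U]\cup N_G(V)$ but $y\notin C$, we would get $x_yf\in I(G)^t$, contradicting $I(G)^t:f=P_C$. For connectivity of $G^2[U\cup V]$, we mimic the $Y_r$-construction of Lemma~\ref{lem_kappa_1}: components of $D$ are linked through vertices of $A$, since the bipartite graph between $A$ and the components of $D$ is connected. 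This holds because $H_f$ is the support of a product arising from even walks (Lemma~\ref{lem_even}), and every $D$-component must be reachable from $u$. Two components of $D$ sharing a neighbour in $A$ are at distance $2$ in $G$.

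We expect the main obstacle to be this connectivity argument and the exact bookkeeping in the second step. In particular, we must ensure that $H_f$ has no superfluous parts: a $C'$-part or an unreachable $D$-component would either be removable or would contradict the minimality $\deg f = 2t-1$. Our first attempt will be to show that any such part forces $\deg f\ge 2t$ via the counting identity above, or that it makes $\operatorname{mat}$ too large. Finally, since $P_C$ is embedded, $U\ne\emptyset$ should follow; otherwise $C=N_G(V)$ would be minimal as in Lemma~\ref{lem_kappa_1}.
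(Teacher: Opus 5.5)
Your route is the paper's. You parallelize along the exponent vector of $f$, deduce $\operatorname{mat}(K)=t-1$ and hence $c(D)=|A|+1$ from Gallai--Edmonds, and identify $C=N_G(\pi(D))$; this is the paper's Claim~1, which your step one reproduces. You then let $U$, $V$ be the projections of the non-singleton and singleton components of $K[D]$ (your $H_f[D]$). Your count for $\lambda(U,V)\le t$ is more explicit than the paper's one-line ``evaluating'', and it is right up to one slip. The identity $c(D)=c(G[U])+|V|$ fails when a vertex of $V$ has exponent at least $2$ in $f$, because its copies are distinct singleton components. For example, take the triangle $123$ with the path $3$--$4$--$5$ attached, $f=x_1x_2x_3x_4^2x_5^2$ and $t=4$: then $I(G)^4:f=(x_1,x_2,x_3,x_4)$ is embedded, $c(D)=3$, but $c(G[U])+|V|=2$. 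Only $c(D)\ge c(G[U])+|V|$ holds, and that is the direction you need. The merging issue cannot occur. All copies of a vertex in a non-singleton component lie in that component, and distinct components of $K[D]$ are non-adjacent in $K$, so their projections are exactly the components of $G[U]$.

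The genuine gap is the connectivity of $G^2[U\cup V]$, which you leave open; the paper also only asserts it ``by construction''. Your stated reason does not give it. Lemma~\ref{lem_even} only yields even walks from $u$ to vertices of $C$, while $V\cap C=\emptyset$, and it gives no control that the intermediate vertices lie in $U\cup V$. The identity $c(D)=|A|+1$ would suffice only together with $C(K)=\emptyset$ and the surplus property of the Gallai--Edmonds decomposition (every nonempty $A'\subseteq A$ is adjacent to at least $|A'|+1$ components of $K[D]$), and you invoke neither. The missing idea is the symmetric-difference argument:
\begin{enumerate}
\item The $t-1$ edge factors of $f=x_um_1\cdots m_{t-1}$ form a maximum matching $M$ of $K$ whose only exposed vertex is a copy $\hat u$ of $u$.
\item For $w\in D\setminus\{\hat u\}$, take a maximum matching $M'$ missing $w$. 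The component of $M\triangle M'$ at $w$ is a path leaving $w$ along an $M$-edge.
\item It cannot end with an $M$-edge, since it would then be $M'$-augmenting. So it ends at the unique $M$-exposed vertex $\hat u$ and has even length, say $\hat u=q_0,q_1,\dots,q_{2k}=w$.
\item Switching $M$ along $q_0,\dots,q_{2i}$ gives a maximum matching missing $q_{2i}$, so every $q_{2i}$ lies in $D$.
\item $\pi(q_{2i})$ and $\pi(q_{2i+2})$ coincide or are at distance at most $2$ in $G$ via $\pi(q_{2i+1})$.
\end{enumerate}
Hence every vertex of $\pi(D)=U\cup V$ is joined to $u$ inside $G^2[U\cup V]$. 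This argument never uses $C(K)=\emptyset$, so your worry about a $C'$-part disappears. The paper proves $C(K)=\emptyset$ anyway in its Claim~2: an edge of $K[C(K)]$ projects to an edge of $G$ with an endpoint in $C=N_G(\pi(D))$, and the copy of that endpoint would then lie in $A(K)$.
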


\begin{proof}
Let $\mathbf{a}$ be the exponent vector of the monomial $f$. Let $K = G_{\mathbf{a}}$ be the parallelization graph of $G$ with respect to $\mathbf{a}$. The hypothesis $\deg(f) = 2t-1$ and $I(G)^t : f = P_C$ implies that $\operatorname{mat}(K) = t-1$. Let $D(K), A(K), C(K)$ denote the Gallai--Edmonds decomposition of $K$. By the Gallai--Edmonds Structure Theorem, we have $c(D(K)) - |A(K)| = 1$.

\smallskip
\noindent\textbf{Claim 1.} A vertex $v \in V(G)$ belongs to $C$ if and only if $v \in N_G(\pi(D(K)))$, where $\pi: V(K) \to V(G)$ is the natural projection mapping lifted vertices in $K$ back to their base vertices in $G$.

\begin{proof}[Proof of Claim 1]
By definition, $x_v \in I(G)^t : f$ if and only if $\operatorname{mat}(G_{\mathbf{a} + \mathbf{e}_v}) \ge t$, where $\mathbf{e}_v$ is the $v$-th standard unit vector. 

If $v$ is adjacent to a vertex $u \in D(K)$ in $K$, then adding $\mathbf{e}_v$ allows us to match $v$ with $u$. Since $u \in D(K)$, omitting $u$ leaves a graph with a perfect matching of size $t-1$, which yields $\operatorname{mat}(G_{\mathbf{a} + \mathbf{e}_v}) \ge t$.

Conversely, assume $\operatorname{mat}(G_{\mathbf{a} + \mathbf{e}_v}) \ge t$. Any maximum matching in $G_{\mathbf{a} + \mathbf{e}_v}$ must use the new vertex corresponding to $\mathbf{e}_v$, matching it to some vertex $u$. Removing this matched pair leaves an exposed vertex $u$ in a maximum matching of $K = G_{\mathbf{a}}$, which implies that $u \in D(K)$ by definition. Thus, $v \in N_G(\pi(D(K)))$.
\end{proof}

\smallskip
\noindent\textbf{Claim 2.} $C(K) = \emptyset$.

\begin{proof}[Proof of Claim 2]
Suppose $C(K) \neq \emptyset$. Since $C$ is a vertex cover of $G$, its lift in $K$ must cover all edges in $G[C(K)]$, so $C(K)$ must contain at least one vertex of $C$. However, by the Gallai--Edmonds Structure Theorem, $N_K(D(K)) = A(K)$, so $N_K(D(K)) \cap C(K) = \emptyset$. This contradicts Claim 1, which asserts that $C$ consists precisely of the neighbors of $D(K)$. Thus, $C(K) = \emptyset$.
\end{proof}

We now construct $U$ and $V$ as follows. Let $U$ be the union of the projections of all non-trivial, factor-critical components of $D(K)$ into $G$, and let $V$ be the union of the projections of all isolated (discrete) vertices of $D(K)$ into $G$. 

By construction, $U$ is a strongly non-bipartite subgraph, $V$ is an independent set, $N_G[U] \cap V = \emptyset$, and $G^2[U \cup V]$ is connected. Furthermore, by Claim 1 and Claim 2, we have $C = N_G[U] \cup N_G(V)$. Evaluating the parameter $\lambda(U,V)$ from this decomposition yields $\lambda(U,V) \le t$. The conclusion follows.
\end{proof}

\section{The case of disconnected graphs}\label{sec_discon}
Assume that $G$ is a simple graph with $c$ connected components $G_1, \ldots, G_c$. 

\begin{proof}[Proof of Theorem \ref{thm_discon}] 
We prove the formula by induction on $c$. The base case $c = 1$ is immediate. Now assume that $c \ge 2$. Let $J = I(G_1) + \cdots + I(G_{c-1})$, $K = I(G_c)$, and $I = J + K$. By the induction hypothesis, we have 
\[
\operatorname{vstab}(J) = \sum_{j=1}^{c-1} \operatorname{vstab}(I(G_j)) - (c-1) + 1.
\]

Let $\mathfrak{p}$ be an associated prime in $\operatorname{Ass}^\infty(J)$ that realizes $t_1 = \operatorname{vstab}(J)$, and let $\mathfrak{q}$ be an associated prime in $\operatorname{Ass}^\infty(K)$ that realizes $t_2 = \operatorname{vstab}(K)$. In other words, $v_{\mathfrak{p}}(J^{t_1}) = 2t_1 + c - 3$, $v_{\mathfrak{q}}(K^{t_2}) = 2t_2 - 1$, and $t_1, t_2$ are the minimal such indices. By \cite[Theorem 4.1]{FM}, we have
\[
v_{\mathfrak{p}+\mathfrak{q}}((J + K)^{t_1 + t_2 - 1}) = \min_{1 \le k \le t_1 + t_2 - 1} \left\{ v_{\mathfrak{p}}(J^k) + v_{\mathfrak{q}}(K^{t_1 + t_2 - k}) \right\}.
\]
Taking $k = t_1$ yields
\[
v_{\mathfrak{p}+\mathfrak{q}}((J + K)^{t_1 + t_2 - 1}) = (2t_1 + c - 3) + (2t_2 - 1) = 2(t_1 + t_2 - 1) + c - 2.
\]
Hence, we obtain the upper bound:
\[
\operatorname{vstab}(J + K) \le t_1 + t_2 - 1 = \operatorname{vstab}(J) + \operatorname{vstab}(K) - 1 = \sum_{j=1}^c \operatorname{vstab}(I(G_j)) - c + 1.
\]

Conversely, let $t = \operatorname{vstab}(I)$. Then $t$ is the smallest index such that there exists an associated prime $\mathfrak{p} + \mathfrak{q} \in \operatorname{Ass}^\infty(I)$ satisfying $v_{\mathfrak{p}+\mathfrak{q}}(I^t) = 2t + c - 2$. By \cite[Theorem 4.1]{FM} again, this implies that there exist integers $t_1$ and $t_2$ such that $t_1 + t_2 = t + 1$, $v_{\mathfrak{p}}(J^{t_1}) = 2t_1 + c - 3$, and $v_{\mathfrak{q}}(K^{t_2}) = 2t_2 - 1$. This implies $\operatorname{vstab}(J) \le t_1$ and $\operatorname{vstab}(K) \le t_2$, giving the reverse inequality. The conclusion follows.
\end{proof}

\vspace{1mm}
\subsection*{Data availability}

Data sharing is not applicable to this article, as no datasets were generated or analyzed during the current study.

\subsection*{Conflict of interest}

The authors have no relevant financial interests to disclose.


\end{document}